\newcommand{\x}{\boldsymbol{x}}
\newcommand{\y}{\boldsymbol{y}}
\newcommand{\boldb}{\boldsymbol{b}}
\newcommand{\q}{\boldsymbol{q}}
\newcommand{\boldv}{\boldsymbol{v}}
\newcommand{\w}{\boldsymbol{w}}
\newcommand{\Nat}{\mathbb{N}}
\newcommand{\R}{\mathbb{R}}
\newcommand{\Resets}{\mathcal{R}}
\newcommand{\E}{\mathbb{E}}
\newcommand{\XMom}{\mathcal{X}}
\newcommand{\Leg}{\mathcal{L}}
\newcommand{\XbarMom}{\overline{\mathcal{X}}}
\newif\ifshow
\newtheorem{remark}{\bfseries Remark}
\newtheorem{example}{\bfseries Example}
\newtheorem{theorem}{\bfseries Theorem}
\newtheorem{lemma}{\bfseries Lemma}
\newcommandx{\unsure}[2][1=]{\todo[linecolor=red,backgroundcolor=red!25,bordercolor=red,#1]{#2}}
\newcommandx{\change}[2][1=]{\todo[linecolor=blue,backgroundcolor=blue!25,bordercolor=blue,#1]{#2}}
\newcommandx{\info}[2][1=]{\todo[linecolor=OliveGreen,backgroundcolor=OliveGreen!25,bordercolor=OliveGreen,#1]{#2}}
\newcommandx{\improvement}[2][1=]{\todo[linecolor=Plum,backgroundcolor=Plum!25,bordercolor=Plum,#1]{#2}}
\newcommandx{\thiswillnotshow}[2][1=]{\todo[disable,#1]{#2}}
\title{\LARGE \bf
Moment Analysis of Stochastic Hybrid Systems \\ Using Semidefinite Programming}
\author{Khem Raj Ghusinga$^{1}$, Andrew Lamperski$^{2}$, and Abhyudai Singh$^{1}$
\thanks{$^{1}$ Khem Raj Ghusinga and Abhyudai Singh are with the Department of Electrical and Computer Engineering, University of Delaware, Newark, DE, USA 19716.
        {\tt\small \{khem,absingh\}@udel.edu}}%
\thanks{$^{2}$ Andrew Lamperski  is with the Department of Electrical and Computer Engineering, University of Minnesota, Minneapolis, MN, USA 55455.
        {\tt\small alampers@umn.edu}}
        }
\begin{document}

\maketitle
\thispagestyle{empty}
\pagestyle{empty}

\begin{abstract}
This paper proposes a semidefinite programming based method for
estimating moments of a stochastic hybrid system (SHS). For polynomial
SHSs -- which consist of polynomial continuous vector fields, reset
maps, and transition intensities -- the dynamics of moments evolve
according to a system of linear ordinary differential equations.
However, it is generally not possible to solve the system exactly
since time evolution of a specific moment may depend upon moments of
order higher than it. One way to overcome this problem is to employ
so-called moment closure methods that give point approximations to
moments, but these are limited in that accuracy of the estimations is
unknown.
We find lower and upper bounds on a moment of interest via a
semidefinite program that includes linear constraints obtained from
moment dynamics, along with semidefinite constraints that arise from
the non-negativity of moment matrices. These bounds are further shown
to improve as the size of semidefinite program is increased.
The key insight in the method is a reduction from stochastic hybrid
systems with multiple discrete modes to a single-mode hybrid system
with algebraic constraints. 
We
further extend the scope of the proposed method to a class of
non-polynomial SHSs which can be recast to polynomial SHSs via
augmentation of additional states. Finally, we illustrate the
applicability of results via examples of SHSs drawn from different
disciplines. 
\end{abstract}

\section{Introduction}
Stochastic Hybrid System (SHS) is a mathematical framework that is applicable to a wide-array of phenomena in engineering, biological and physical systems  \cite{bohacek2003hybrid,Hespanha04,hespanha2005model, hu2006application, antunes2013stochastic, hespanha_modeling_2014, buckwar2011exact, visintini2006monte, liu2011probabilistic, hu2003aircraft, stvrelec2012modeling, david2015statistical,Li2017}. An SHS is specified by a finite number of discrete states (modes), stochastic dynamics of a continuous state, a set of rules governing transitions that can change the continuous state as well as the discrete state, and reset maps that define how the states change after a transition \cite{hespanha2006modelling,teel2014stability,Hu2000}. Despite wide-applicability of SHSs, their formal analysis is often challenging. For example, the probability density function of the SHS state space can be characterized by Kolmogorov equations, but solving them analytically is not possible in most cases. The probability density function can also be estimated by running a large number of Monte Carlo simulations; however, it is typically computationally prohibitive. 

Computing moments of SHS is another approach that provides important insights into its dynamics. For an SHS whose continuous state, transition intensities, and reset maps are described via polynomials, the time evolution of its moments is governed by a system of linear ordinary differential equations \cite{hespanha2005model}. However, the moment dynamics is not closed (except for few special cases, e.g.,  \cite{soltani2017stochastic,sos17Automatica}) as in the time-evolution of a moment of certain order depends on moments of order higher than it. Furthermore, when the SHS consists of non-polynomial nonlinearities, the moment dynamics also contains non-polynomial moments, in addition to the higher order moments as in the polynomial case. In presence of these issues, it is desirable to develop methods that provide approximate values of desired moments with provable guarantees.

For polynomial SHSs, the problem of unclosed moments is usually overcome by using the \textit{moment closure} methods \cite{whittle1957use,krishnarajah2005novel,konkoli2012modeling,smadbeck2013closure,grima2012study,Kuehn2016}. These methods truncate the infinite-dimensional moment equations to some finite order and then approximate the higher order moments appearing in them in terms of the moments of lower order. There are numerous methods proposed for this purpose which either assume that the probability density function of the state follows a certain distribution, or that some higher order moments/cumulants are zero \cite{Kuehn2016,socha2007linearization,singh2011approximate,soltani2015conditional}. A limitation of these methods is that they provide point approximations to moments of interest without any guarantee on errors. Although not-widely used in practiced, the moment closure methods are applicable to non-polynomial SHSs that can be casted to polynomial SHSs by defining additional states \cite{socha2007linearization,bcp16}.

Recently, a semidefinite programming based method to estimate moments of polynomial jump diffusion processes (and its special cases) has been developed \cite{lamperski2017analysis,lamperski2016stochastic,ghusinga2017exact}. This method utilizes the semidefinite inequalities that are satisfied by the moments of the system under consideration and finds monotonic sequence of lower and upper bounds on a moment of interest. In this paper, we extend the method to both polynomial and non-polynomial SHSs, thus covering a large class of stochastic systems. The key difference between the jump-diffusion and the SHS based models is that the SHS model has a (typically finite) number of discrete modes. While previous works have dealt with moment dynamics for multiple discrete modes, their approach has been to analyze the moments of continuous state given a discrete state. Here, we present an augmented state space method that transforms the system to a single-mode SHS and allows joint analysis of discrete and continuous states. We use a similar idea of appending additional states to write moment dynamics and estimate moments of a class of SHSs defined over non-polynomial functions. The method is illustrated via two examples drawn from communication systems, and biology.


\subsection*{Notation} 
For stochastic processes and their moments, we omit explicit dependence on time unless it is not clear from the context. Inequalities for vectors are element-wise. Random variables are denoted in bold. The $n$-dimensional Euclidian space is denoted by $\mathbb{R}^n$. The set of non-negative integers is denoted by $\Nat$. $\E(\x)$ is used for expectation of a random variable $\x$. An $N$-dimensional vector consisting of zeros except for $i^{th}$ position is denoted by $\mathds{1}_{s_i}$.

\section{Background on Stochastic Hybrid Systems} \label{sec:shs}
In this section, we provide brief overview of a SHS construction and its mathematical characterization. The reader is referred to \cite{hespanha2006modelling,teel2014stability,Hu2000} for technical details on SHS, and its relationship with various other classes of stochastic systems.
\subsection{Basic Setup}
The state space of a SHS consists of a continuous state $\x(t)\in \mathbb{R}^{n}$ and a discrete state $\q(t) \in Q=\{s_1, s_2, \ldots, s_N\}$. There are three components of SHS that define how its states evolve over time. First, the continuous state evolves as per a stochastic differential equation (SDE)
\begin{subequations}\label{eq:SHSdef}
\begin{align} \label{eq:sde}
d\x = f(\q,\x)dt + g(\q,\x)d\w,
\end{align}
where $f: Q \times \R^n\to \R^n$ and $g:Q \times \R^n\to \R^{n\times k}$ are respectively the drift and diffusion terms, and $\w$ is a $k$--dimensional Weiner process. Second, the state $(\q,\x)$ changes stochastically through $\Resets$ transitions/resets that are characterized by the transition intensities
\begin{align} \label{eq:lambda}
&\lambda_r(\q,\x), \;\; \lambda_r: Q  \times \mathbb{R}^n  \to [0, \infty),\;\; r=1,2,\ldots,\mathcal{R}.
\end{align}
Third, the transition for each $r=1,2,\ldots,\mathcal{R}$ has an associated reset map 
\begin{align} \label{eq:phi}
& (\q,\x) \mapsto \left(\theta_r(\q), \phi_r(\q,\x)\right), \nonumber \\
& \theta_r: Q \to Q, \phi_r: Q\times \mathbb{R}^n  \to  \mathbb{R}^n
\end{align}
\end{subequations}
that defines how the pre-transition discrete and continuous states map into the post-transition discrete and continuous states. One way to think about an SHS is to consider the discrete states as different modes, each of which has an associated SDE describing the time evolution of the continuous state. The reset events can either reset the continuous state and remain in the same mode (i.e, the continuous state evolves via the same SDE as before the reset occured), or reset both the continuous state and the mode.

For purpose of this work, we first assume that for a given discrete state, the functions $f$, $g$, $\lambda_r$, and $\phi_r$ are  polynomials in $\x$. We then consider the case when these could be non-polynomial functions that are composition of rational functions, trigonometric functions, exponential, and logarithm.


\subsection{Extended Generator}
Mathematical characterization of SHS \eqref{eq:SHSdef} requires computation of expectation of some large class of functions evaluated on its state space.  To this end, the extended generator describes time evolution of a scalar test function $\psi: Q \times \mathbb{R}^n \to \mathbb{R}$ which is twice continuously differentiable with respect to its  second argument (i.e., $\x$). This is given as
\begin{subequations}\label{eq:ExtGen}
\begin{align}\label{eqn:ExtGenMom}
\frac{d\E \left[\psi(\q,\x)\right]}{dt}
=
\E\left[(\Leg\psi)(\q,\x)\right],
\end{align}
where $\E$ denotes the expectation operator and $\Leg$ is called the extended generator
\begin{multline}\label{eqn:shsGen}
(\Leg \psi)(\q,\x):=
\frac{\partial \psi(\q,\x)}{\partial \x} f(\q,\x) \\
 + \frac{1}{2}\, {\rm Trace} 
\left(\frac{\partial^2 \psi(\q,\x)}{\partial \x^2}g(\q,\x)g(\q,\x)^\top\right)\\
+ 
\sum_{r=1}^{\mathcal{R}}\left(\psi\left(\theta_r(\q),\phi_r(\q,\x)\right)-\psi (\q,\x) \right)\lambda_r(\q,\x).
\end{multline}
\end{subequations}
The terms $\frac{\partial \psi(\q,\x)}{\partial \x}$ and $\frac{\partial^2 \psi(\q,\x)}{\partial \x^2}$ respectively denote the gradient and the Hessian of $\psi(\q,\x)$ with respect to $\x$ \cite{hespanha2005model}. Appropriate choice of $\psi(\q,\x)$ gives a dynamics of moments of SHS as described in the next section.

\section{Moment Analysis of Polynomial SHS}\label{sec:mom}
In this section, we focus on SHS defined over polynomials: for each discrete state $\q$, the functions $f$, $g$, $\lambda_r$, and $\phi_r$ are polynomials in the continuous state $\x$. We describe how the extended generator gives time evolution of its moments. We then discuss the problem of moment closure, and propose our methodology to estimate moments.

\subsection{Moment dynamics for polynomial SHS with single discrete state}
We first consider a simpler system that has only one discrete mode/state ($\q$ can be dropped for ease of notation). For a given $n$-tuple $m=(m_1,m_2,\ldots,m_n) \in \Nat^n$, moment dynamics can be computed by plugging in the monomial test function 
\begin{equation}
\psi(\x)=
\x_1^{m_1}\x_2^{m_2}\ldots \x_n^{m_n}
\end{equation}
in \eqref{eq:ExtGen}. Here order of the moment $\E(\x_1^{m_1}\x_2^{m_2}\ldots \x_n^{m_n})$ is given by $\sum_{i=1}^{n}m_i$, and there are 
${\sum_{i=1}^{n}m_i+n-1} \choose {n-1}$ moments of the order  of order $\sum_{i=1}^{n}m_i$. 
The following standard result shows how dynamics of a collection of moments of $\x$ evolves over time for a special class of SHS that are defined via polynomials.


\begin{lemma}\label{lemma:poly1}
Let $f(\x)$, $g(\x)g(\x)^{\top}$, $\lambda_r(\x)$ and $\phi_r(\x)$ be polynomials in $\x$. Denoting the vector consisting of all moments up to a specific order of $\x$ by $\mathcal{X}$, its time evolution can be compactly written as
\begin{equation}\label{eq:singlemode}
\frac{d\XMom}{dt}=
A \XMom+B \XbarMom\\
\end{equation}
for appropriately defined matrices $A$, $B$. Here $\XbarMom$ is a collection of moments whose order is higher than those stacked up in   $\XMom$.
\end{lemma}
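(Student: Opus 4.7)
The plan is to apply the extended generator formula \eqref{eq:ExtGen} with monomial test functions $\psi(\x) = \x_1^{m_1}\cdots \x_n^{m_n}$, ranging over all $n$-tuples $m = (m_1, \ldots, m_n) \in \Nat^n$ with $|m| := \sum_i m_i \leq d$ for a chosen target order $d$. The key observation to establish is that $(\Leg\psi)(\x)$ is itself a polynomial in $\x$, so taking expectations yields a finite linear combination of moments. The matrices $A$ and $B$ then arise by segregating the moments appearing on the right-hand side into those of order $\leq d$ (which populate $\XMom$) and those of higher order (which populate $\XbarMom$).

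The first step is to verify polynomiality of $(\Leg\psi)(\x)$ term by term using the single-mode version of \eqref{eqn:shsGen}. The drift contribution $(\partial\psi/\partial\x)f(\x)$ is the inner product of a vector of monomials of degree $|m|-1$ with a polynomial vector, hence polynomial of degree at most $|m|-1+\deg f$. The diffusion contribution involves the trace of the Hessian of $\psi$ (entries are monomials of degree $|m|-2$) against the polynomial matrix $g(\x)g(\x)^\top$, hence polynomial. Each jump contribution $\bigl(\psi(\phi_r(\x))-\psi(\x)\bigr)\lambda_r(\x)$ is polynomial because the composition of a monomial with the polynomial $\phi_r$ is polynomial, and multiplication by $\lambda_r$ preserves polynomiality. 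Summing, $(\Leg\psi)(\x)$ is polynomial of degree bounded by some $D(|m|)$ determined by the degrees of $f$, $gg^\top$, $\phi_r$, and $\lambda_r$.

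The second step is bookkeeping: expand $(\Leg\psi)(\x) = \sum_{k} c_k^{(m)} \x^k$ in the monomial basis, where the sum runs over multi-indices $k$ with $|k| \leq D(|m|)$, and apply $\E[\cdot]$ by linearity to obtain $\frac{d}{dt}\E[\x^m] = \sum_{k} c_k^{(m)} \E[\x^k]$. Splitting the right-hand side according to whether $|k| \leq d$ or $|k| > d$ and stacking over all $|m| \leq d$ recovers the desired ODE. The vector $\XMom$ collects $\E[\x^m]$ for $|m| \leq d$; the vector $\XbarMom$ collects the finitely many higher-order moments that appear on the right; and the entries of $A$, $B$ are read off row-by-row from the coefficients $c_k^{(m)}$.

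The main obstacle is not in the logic but in the indexing and degree accounting: one must verify that the set of higher-order moments appearing in the dynamics is finite (so $\XbarMom$ is well defined) and matches up consistently across rows of $B$. This follows immediately from the degree bound $D(|m|)$ established in the first step together with the fact that polynomial manipulations produce only finitely many monomials. No analytic subtlety or convergence argument is required, since all sums in sight are finite by construction.
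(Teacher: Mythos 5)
Your proposal is correct and follows essentially the same route as the paper: apply the extended generator to monomial test functions, observe that polynomiality of $f$, $gg^\top$, $\lambda_r$, $\phi_r$ makes $\Leg\psi$ a polynomial, and take expectations to read off the linear ODE. The paper's proof is just a terser version of this; your term-by-term verification and degree bookkeeping fill in details the paper leaves implicit.
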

\begin{proof}
Since $f(\x)$, $g(\x)g(\x)^{\top}$, $\lambda_r(\x)$ and $\phi_r(\x)$ are polynomials, the extended generator in \eqref{eqn:shsGen} maps monomials of the form $\x_1^{m_1}\x_2^{m_2}\ldots \x_n^{m_n}$ to a linear combination of monomials of different orders. Collecting all moments up to some order (including the zeroth order moment) in a vector $\XMom$, 
the form in \eqref{eq:singlemode} follows from \eqref{eqn:ExtGenMom}.
\end{proof}
The moment dynamics in \eqref{eq:singlemode} is well-known \cite{hespanha2005model}.  It is worth noting that the matrix $B$ has all its elements $zero$ if all the functions $f(\x)$, $g(\x)g(\x)^{\top}$, $\lambda_r(\x)$ and $\phi_r(\x)$ are affine in $\x$. In this case, the moments contained in $\mathcal{X}$ can be exactly computed. Next, we discuss moment dynamics for SHS with multiple discrete states.

\subsection{Moment dynamics for polynomial SHS with finite number of discrete states}
Now we consider a general SHS that has a finite, but more than one, discrete states. In this case, one is interested in knowing moments of the continuous state given a discrete state and the probability that the system is in the given discrete state. To compute these, we define an $N$-dimensional state
\begin{subequations}\label{eq:b}
\begin{equation}\label{eq:defb}
\boldb = (\boldb_1,\boldb_2,\ldots,\boldb_N)\in \R^N
\end{equation}
such that each $\boldb_i, i=1, 2, \ldots, N$ serves as an indicator of the discrete state being $\q=s_i$
\begin{equation}\label{eq:defbi}
\boldb_i=
\begin{cases}
1, & \q=s_i \\
0, & {\rm else}
\end{cases}.
\end{equation}
For example, when the discrete state $\q=s_1$, then we represent it by the tuple $\boldb=(1,0,\ldots,0)$. It follows that
the following properties hold
\begin{equation}\label{eq:Binary}
\sum_{i=1}^{N}\boldb_i=1; \quad \boldb_i \boldb_j=0, \,i\neq j; \quad \boldb_i^2=\boldb_i.
\end{equation}
\end{subequations}
Furthermore, $\E(\boldb_i)$ is equal to the probability of $\q=s_i$,
while $\E(\boldb_i \x_1^{m_1}\x_2^{m_2}\ldots \x_n^{m_n})$ is equal to the product of the the
probability that $\q=s_i$ and the moment of $\x_1^{m_1}\x_2^{m_2}\ldots \x_n^{m_n}$, conditioned on $\q=s_i$. We can recast the SHS in \eqref{eq:SHSdef} to the new state space $(\boldb,\x)$ as described via the following lemma.
\begin{lemma}\label{lemma:Nto1modes}
Consider the SHS described in \eqref{eq:SHSdef}. With $\boldb \in \R^N$ defined in \eqref{eq:b}, let a single-discrete mode SHS with state space $\left(\boldb, \x \right) \in \R^{N+n}$ be described by the continuous dynamics
\begin{subequations}\label{eq:SHSRecast}
\begin{align}\label{eq:RecastCont}
d\begin{bmatrix}\boldb \\ \x \end{bmatrix}=
\begin{bmatrix} 0 \\ \sum_{i=1}^{N}\boldb_i f(s_i,\x) \end{bmatrix} dt 
+ 
\begin{bmatrix} 0 \\ \sum_{i=1}^{N}\boldb_i g(s_i,\x)d\w \end{bmatrix},
\end{align}
reset intensities
\begin{equation}\label{eq:RecastInt}
\sum_{i=1}^{N}\boldb_i \lambda_r(s_i,\x), \quad r=1, 2, \ldots, \mathcal{R},
\end{equation}
and reset maps
\begin{equation}\label{eq:RecastRM}
\left(\boldb,\x\right)\mapsto \left(\boldb-\sum_{i=1}^{N}\boldb_i \mathds{1}_{s_i}+\sum_{i=1}^{N}\boldb_i \mathds{1}_{\theta_r(s_i)},\sum_{i=1}^{N}\boldb_i \phi_r(s_i,\x)\right).
\end{equation}
\end{subequations}
Then \eqref{eq:SHSRecast} recasts \eqref{eq:SHSdef} in $(\boldb,\x)$ space.
 \end{lemma}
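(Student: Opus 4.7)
The plan is to verify that the single-mode system in \eqref{eq:SHSRecast} reproduces exactly the infinitesimal behavior of \eqref{eq:SHSdef} whenever the indicator $\boldb$ is consistent with the current discrete state $\q$, i.e., $\boldb = \mathds{1}_{s_i}$ whenever $\q = s_i$. I would first establish this bijective correspondence: by construction $\boldb \in \{0,1\}^N$ lies on a vertex of the simplex defined by the properties \eqref{eq:Binary}, and every such vertex corresponds to a unique element of $Q$. The goal is then to check that continuous flow, jump intensities, and jump targets all agree component by component across this correspondence.

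For the continuous dynamics, note that \eqref{eq:RecastCont} holds $d\boldb = 0$ between resets, so $\boldb$ is preserved under the SDE. Since $\boldb_j = \delta_{ij}$ when $\q = s_i$, the drift and diffusion terms collapse to
\begin{equation*}
\sum_{i=1}^N \boldb_i f(s_i,\x) = f(s_i,\x), \qquad \sum_{i=1}^N \boldb_i g(s_i,\x) = g(s_i,\x),
\end{equation*}
which recovers \eqref{eq:sde} exactly. The same collapse applied to \eqref{eq:RecastInt} shows that the $r$-th reset fires with intensity $\lambda_r(s_i,\x)$, matching \eqref{eq:lambda}.

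For the reset maps, I would verify the two components of \eqref{eq:RecastRM} separately under the assumption $\boldb = \mathds{1}_{s_i}$. The continuous part simplifies as above to $\phi_r(s_i,\x)$, matching the second component of \eqref{eq:phi}. The discrete part evaluates to
\begin{equation*}
\mathds{1}_{s_i} - \mathds{1}_{s_i} + \mathds{1}_{\theta_r(s_i)} = \mathds{1}_{\theta_r(s_i)},
\end{equation*}
which is precisely the indicator corresponding to the new mode $\theta_r(s_i) = \theta_r(\q)$. Hence both systems produce identical post-reset states under the bijection.

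The main obstacle here is not technical difficulty but bookkeeping: I would want to be explicit that the bijection $\q \leftrightarrow \boldb$ is preserved along every sample path (the continuous flow keeps $\boldb$ constant, and every reset sends a vertex of the simplex to another vertex), so the equivalence is not merely an equality in distribution at a single time but holds path-wise. A concise induction on successive reset epochs — starting from matched initial conditions, applying the continuous-evolution equivalence on each inter-reset interval, and applying the reset-map equivalence at each jump — would formalize this and complete the proof.
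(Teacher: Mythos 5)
Your proposal is correct and follows essentially the same approach as the paper's proof: fix the current mode $\q = s_i$, use $\boldb = \mathds{1}_{s_i}$ to collapse the sums in the drift, diffusion, intensities, and reset maps to the corresponding single-mode quantities, and note the discrete part of the reset sends $\mathds{1}_{s_i}$ to $\mathds{1}_{\theta_r(s_i)}$. Your added remark that the indicator--mode correspondence is preserved path-wise (constant between resets, vertex-to-vertex at jumps) is a useful explicit justification of the step the paper handles implicitly by saying the mode was chosen arbitrarily.
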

\begin{proof}
Let $\q(t)=s_j \in Q$. Then \eqref{eq:b} implies that dynamics of $\x$ in \eqref{eq:RecastCont} becomes
\begin{equation}
d\x=f(s_j,\x)dt+g(s_j,\x)d\w,
\end{equation}
which is same as \eqref{eq:sde}. Likewise, the rest intensities for both  \eqref{eq:SHSRecast} and \eqref{eq:SHSdef} take the form
\begin{equation}
\lambda_r(s_j,\x), \quad r=1, 2, \ldots, \mathcal{R}.
\end{equation}
As for the reset maps, \eqref{eq:RecastRM} yields
\begin{equation}
\left(\mathds{1}_{s_j},\x\right)\mapsto \left(\mathds{1}_{s_j}-\mathds{1}_{s_j}+\mathds{1}_{\theta_r(s_j)},\phi_r(s_j,\x)\right),
\end{equation}
which by definition in \eqref{eq:b} is same as  \eqref{eq:phi}
\begin{equation}
\left(s_j,\x\right)\mapsto \left(\theta_r(s_j),\phi_r(s_j,\x)\right).
\end{equation}
Since we arbitrarily chose $\q=s_j \in Q$, the equivalence between the
two SHSs will hold true for any $\q$.
\end{proof}

To write the  moment dynamics of SHS in \eqref{eq:SHSRecast}, we can use monomial test functions 
\begin{equation}
\psi(\boldb,\x)=
\boldb_1^{m_1} \boldb_2^{m_2}\ldots \boldb_{N}^{m_{N}}\x_1^{m_{N+1}}\x_2^{N+m_2}\ldots \x_n^{m_{N+n}},
\end{equation}
supplemented with the constraints in \eqref{eq:Binary}. It is worth noting that  \eqref{eq:SHSRecast} is a polynomial SHS in $(\boldb,\x)$ space if the original SHS was polynomial in $\x$. The following result provides a general form for the moment dynamics.
\begin{theorem}\label{lemma:polyN}
Consider the SHS in \eqref{eq:SHSRecast}. Let $f$, $g$, $\lambda_l$ and $\phi_l$ be polynomials in $\x$. Denoting the vector consisting of all moments up to a specific order of the state $(\boldb,\x)$ by $\mathcal{X}$, its time evolution can be compactly written as
\begin{subequations}\label{eq:MomMultMode}
\begin{align}
\frac{d\mathcal{X}}{dt}=
& A \mathcal{X}+B \bar{\mathcal{X}}, \label{eq:MomMultMode1}\\
0= & C \mathcal{X}+D \bar{\mathcal{X}} \label{eq:MomMultMode2}
\end{align}
\end{subequations}
for appropriately defined matrices $A$, $B$, $C$, $D$. Here $\bar{\mathcal{X}}$ is a collection of moments whose order is higher than those stacked up in   $\mathcal{X}$.
\end{theorem}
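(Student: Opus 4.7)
The plan is to combine Lemma~\ref{lemma:Nto1modes} and Lemma~\ref{lemma:poly1} for the differential part of \eqref{eq:MomMultMode}, and derive the algebraic part \eqref{eq:MomMultMode2} directly from the structural identities \eqref{eq:Binary} satisfied by $\boldb$. First, I would observe that if the original $f$, $gg^\top$, $\lambda_r$, $\phi_r$ are polynomial in $\x$, then the recast drift $\sum_i \boldb_i f(s_i,\x)$, diffusion $\sum_i \boldb_i g(s_i,\x)$, intensities $\sum_i \boldb_i \lambda_r(s_i,\x)$, and reset maps in \eqref{eq:SHSRecast} are polynomials in the augmented state $(\boldb,\x)$. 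Hence \eqref{eq:SHSRecast} is a single-mode polynomial SHS, and Lemma~\ref{lemma:poly1} applied with test functions $\psi(\boldb,\x)=\boldb_1^{m_1}\cdots\boldb_N^{m_N}\x_1^{m_{N+1}}\cdots\x_n^{m_{N+n}}$ yields \eqref{eq:MomMultMode1}, where $\mathcal{X}$ stacks all moments up to some chosen degree and $\bar{\mathcal{X}}$ stacks those of strictly higher degree produced by the generator.

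Next, I would construct \eqref{eq:MomMultMode2} from the three families of identities in \eqref{eq:Binary}. For any monomial $\mu(\boldb,\x)$ whose expectation appears in $\mathcal{X}$, and for each $i\neq j$ and each $i$, multiplying the identities
\begin{equation*}
\sum_{k=1}^N \boldb_k - 1 = 0,\qquad \boldb_i\boldb_j = 0,\qquad \boldb_i^2-\boldb_i = 0
\end{equation*}
by $\mu(\boldb,\x)$ and taking expectations produces a linear relation among moments. Each such relation involves the chosen moment from $\mathcal{X}$ together with moments whose $\boldb$-degree is one higher, which belong to $\bar{\mathcal{X}}$ (or still to $\mathcal{X}$, if the degree allowance permits). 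Collecting all such relations across all admissible $\mu$, $i$, $j$ gives the affine system $0 = C\mathcal{X} + D\bar{\mathcal{X}}$ for matrices $C,D$ whose rows are indexed by (monomial, identity) pairs and whose entries record the coefficient of each moment in the resulting expectation equation.

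Combining the two parts yields \eqref{eq:MomMultMode}. The main obstacle I anticipate is the bookkeeping rather than any conceptual difficulty: one must (i) fix a consistent truncation degree so that $\bar{\mathcal{X}}$ captures all higher-order moments produced both by the extended generator in Lemma~\ref{lemma:poly1} and by the $\boldb$-degree-raising algebraic identities, and (ii) describe the rows of $C,D$ in a way that makes clear the redundancy structure — many monomials $\mu$ will yield the same identity after applying $\boldb_i\boldb_j=0$, so one should either accept a redundant (but still valid) system or restrict $\mu$ to a spanning subset. Neither point affects correctness of the stated form, so the proof reduces to checking that the two derivations above both produce affine equations in $(\mathcal{X},\bar{\mathcal{X}})$ of the claimed shape.
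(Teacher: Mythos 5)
Your proposal is correct and follows essentially the same route as the paper: apply Lemma~\ref{lemma:poly1} to the recast polynomial SHS for the differential part, and obtain the algebraic constraints \eqref{eq:MomMultMode2} by taking expectations of the three identities in \eqref{eq:Binary} multiplied by monomials (the paper simply enumerates the three resulting families of moment relations rather than phrasing them uniformly as you do). Your added remarks on truncation and redundancy are consistent with the paper's own remark that zero and duplicated moments are omitted in practice.
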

\begin{proof}
Since  \eqref{eq:SHSRecast} is polynomial in $(\boldb,\x)$, the form in \eqref{eq:MomMultMode1} follows from Lemma~\ref{lemma:poly1}. The property $\boldb_i\boldb_j=0$ in \eqref{eq:Binary} implies that for a non-zero $m_i \in \Nat$, all moments except those of the form $\E\left(\boldb_i^{m_i} \x_1^{m_{N+1}}\x_1^{m_{N+2}}\ldots \x_n^{m_{N+n}}\right)$ are zero. Furthermore, $\boldb_i^2=\boldb_i$ results in 
\begin{multline}\label{eq:highLow}
\E\left(\boldb_i^{m_i} \x_1^{m_{N+1}}\x_1^{m_{N+2}}\ldots \x_n^{m_{N+n}}\right) \\
=\E\left(\boldb_i \x_1^{m_{N+1}}\x_1^{m_{N+2}}\ldots \x_n^{m_{N+n}}\right),
\end{multline}
for all $m_i\geq 1$. The constraint $\sum_{i=1}^{N}\boldb_i=1$ results in
\begin{multline}
\sum_{i=1}^{N}\E\left(\boldb_i \x_1^{m_{N+1}}\x_1^{m_{N+2}}\ldots \x_n^{m_{N+n}}\right)\\
-\E\left(\x_1^{m_{N+1}}\x_1^{m_{N+2}}\ldots \x_n^{m_{N+n}}\right)=0.
\end{multline}
These three constraints can be compactly represented by \eqref{eq:MomMultMode2}.
\end{proof}

\begin{remark}
In Theorem~\ref{lemma:polyN} we have assumed that all moments up to a
certain order are collected in $\XMom$ and remaining, higher order,
moments are collected in $\XbarMom$. However, since many of these
moments are equal to zero, in practice we do not include them in
$\XMom$ and $\XbarMom$. Similarly, higher order moments that are equal
to lower order moments, as in \eqref{eq:highLow}, are not included. 
\end{remark}

The form of moment dynamics for polynomial SHSs implies that the moments in $\XMom$ cannot be computed exactly, since they depend upon the moments in $\XbarMom$. This is often referred to the problem of moment closure, and there are many methods that have been proposed in the literature to close the moment dynamics. Some of these methods ignore the higher order moments or cumulants to find the closure, while others use dynamical systems properties or physical principles to find the closure \cite{Kuehn2016,socha2007linearization,singh2011approximate,soltani2015conditional}. In all these methods, the approximations are ad-hoc; they could be quite accurate for a specific system under study while they could perform poorly for other systems. In the following, we discuss a semidefinite programming based method that gives provable bounds on the moments.

\subsection{Bounding Moment Dynamics}
In our recent work, we proposed to approximate the moment dynamics by making use of the fact that the higher oder cannot take arbitrary values and must conserve semidefinite properties \cite{lamperski2017analysis,lamperski2016stochastic,ghusinga2017exact}. These properties arise naturally from the fact that outer products of vectors consisting of monomials are positive semidefinite, and this semidefinite constraint is maintained by taking expectations. 
For instance, if $\x \in \R$, then
\begin{multline}
\E \left(
\begin{bmatrix}
1 \\
\x \\
\x^2
\end{bmatrix}
\begin{bmatrix}
1 &
\x &
\x^2
\end{bmatrix}
\right)\\
= 
\begin{bmatrix}
1 & \E\left(\x\right) & \E\left(\x^2\right)\\
\E\left(\x\right)& \E\left(\x^2\right) &\E\left(\x^3\right)\\
\E\left(\x^2\right) & \E\left(\x^3\right) & \E\left(\x^4\right)
\end{bmatrix}
\succeq 0.
\end{multline}

In general, if $v_1(\x),\ldots,v_p(\x)$ is an collection of
polynomials, then there is a matrix $M$ such that
\begin{equation}
  \label{eq:basicLMI}
\E\left( \begin{bmatrix}
  v_1(\x) \\
  \vdots \\
  v_p(\x)
\end{bmatrix}
\begin{bmatrix}
  v_1(\x) \\
  \vdots \\
  v_p(\x)
\end{bmatrix}^\top
\right)= M(\XMom, \XbarMom) \succeq 0,
\end{equation}
where $\mathcal{X}$ and $\bar{\mathcal{X}}$ are the collection of moments as from \eqref{eq:singlemode} \cite{lamperski2017analysis}. More constraints can be constructed by having a family of functions $h_i(\x)>0$ 
\begin{equation}\label{eq:sdpPos}
E\left(h_i(\x) \begin{bmatrix}
  v_1(\x) \\
  \vdots \\
  v_p(\x)
\end{bmatrix}
\begin{bmatrix}
  v_p(\x) \\
  \vdots \\
  v_p(\x)
\end{bmatrix}^\top
\right)= M_i (\XMom, \XbarMom) \succeq 0.
\end{equation}

Using these inequalities, bounds on moments of an SHS defined over polynomials can be computed. In particular, a lower bound on a moment of interest $\mu$ at a given time $\tau$ can be computed via the semidefinite program \cite{lamperski2017analysis}
\begin{subequations}
    \begin{align}
      & \underset{\XMom(t),\XbarMom(t)}{\textrm{minimize}} && \mu(\tau)\\
      & \textrm{subject to} && \frac{d\XMom}{dt} = A \XMom(t) + B \XbarMom(t) \label{eqn:momdynAB}\\
       & {} && 0 = C \XMom(t) + D  \XbarMom(t) \\
      &&& M(\XMom(t), \XbarMom(t)) \succeq 0 \\
      &&& M_i(\XMom(t), \XbarMom(t)) \succeq 0 \\
      &&& \XMom(0)=\XMom_0
      \end{align}
\end{subequations}
for all $t\in [0,\tau]$.
The upper bound can be computed by maximizing the objective function. Moreover, if the number of moments stacked in $\mathcal{X}$ are increased and correspondingly the sizes of $M$ and $M_i$ are increased, the lower and upper bounds often improve. Theoretically, the increase implies that more constraints are added to the program and therefore the bounds cannot get worse. However, in practice they improve and converge to the true moment value.  

Solving the above semidefinite program however has several challenges. First, the semidefinite program needs discretization of the time in the interval $[0,\tau]$ and thereby the size of the overall program gets large quickly. Secondly, the semidefinite matrices $M$ and $M_i$ are often ill-conditioned because their elements are moments. Due to these issues, the semidefinite program based approach is computationally restrictive. Nonetheless, the program becomes much simpler if bounds on only stationary moments are desired. To see this, note that if the SHS has a stationary distribution then lower bound for a stationary moment $\mu \in \XMom$ is given by
\begin{subequations}\label{eq:SDPss}
    \begin{align}
      & \underset{\XMom,\bar{\mathcal{X}}}{\textrm{minimize}} && \mu\\
      & \textrm{subject to} && 0= A \XMom + B \bar{\mathcal{X}} \\
       & {} && 0 = C \mathcal{X} + D \bar{\mathcal{X}} \\
      &&& M(\XMom,\XbarMom) \succeq 0\\
      &&& M_i(\XMom,\bar{\mathcal{X}}) \succeq 0
           \end{align}
           \end{subequations}
The reader may refer to \cite{meyn2012markov,deville2016moment} for details on when a stationary distribution would exist for a given stochastic process. Next, we extend the method  to study non-polynomial SHS that can be recasted as polynomial SHS with additional states and algebraic constraints.
\begin{remark}
The proposed method of estimating bounds on moments results in trivial lower bounds for systems that have all elements in the first column of $A$ as zero. For such systems, there are multiple steady-state solutions that can satisfy that bounds, and the lowest one is always the degenerate distribution. 
\end{remark}

\section{Moment Analysis for Non-Polynomial SHS}
Consider a polynomial SHS defined as in \eqref{eq:SHSdef}, with additional algebraic constraints of the form
\begin{subequations}\label{eq:const}
\begin{align}
& l_b \leq a_n(\x) \leq u_b,\\
& a_p(\x)=0.
\end{align}
\end{subequations}
where $a_n(\x)$ and $a_p(\x)$ are appropriately defined vectors. In this section, we provide a general-purpose method that can be used to cast a variety of non-polynomial SHSs to a polynomial SHSs with constraints in \eqref{eq:const}. We then extend the semidefinite programming methodology to estimate its moments.

\subsection{Moment dynamics of non-polynomial SHS  by recasting them as polynomials}
To see how various non-polynomial SHSs can be reformulated as polynomial SHSs by appending states, we first consider the SHS in \eqref{eq:SHSdef} wherein all functions are rationals except for the reset maps $\phi_l$ which we assume to be polynomial. Without loss of generality, we can consider a single discrete state since Lemma~\ref{lemma:Nto1modes} allows reduction of a SHS with multiple discrete modes. Let $J(\x)$ be the least common denominator for all $f(\x)$, $g(\x)g^\top(\x)$, and $\lambda_l(\x)$. Defining a new state $\y=\frac{1}{J(\x)}$, it is straightforward to see that one gets a polynomial SHS in the state $(\x,\y) \in \R^{n+1}$, with an equality constraint
\begin{align}
J(\x)\y-1=0.
\end{align}

While not studied formally in the context of SHSs, a similar approach to define additional states to study non-polynomial stochastic systems has been used earlier \cite{socha2007linearization,bcp16}. We propose an heuristic methodology for SHSs, which is heavily inspired from polynomial abstraction of non-polynomial deterministic hybrid systems that consist of nonlinearities involving elementary functions, viz., exponential, trigonometric, logarithm, or a composition of these  \cite{lzz15}. For simplicity we first restrict ourselves to SHSs with no resets and carry out the following steps.
\begin{enumerate}
\item[(i)] Suppose there are $L_1$ non-polynomial/non-rational functions of $\x$ in $f$, and $g$ wherein composite functions are counted as many times as they are composition of. Define new states $\y_l, l=1, 2, \ldots, L_1$, for each.
\item[(ii)] Take derivatives of each of the states $\y_l, l=1, 2, \ldots, L_1$ with respect to its arguments. If there are non-rational nonlinear terms consisting of $\x$ and $\y_l$ that are not absorbed by $\y_l$, define additional states to account for them. Suppose there are $\y_l, l=1, 2, \ldots, L_2$ states now.
\item[(iii)] Repeat step (ii) until rational terms appear. Define another state to account for the least common denominator of the rational terms. Eventually we would have $L$ additional states $\y_l, l=1, 2, \ldots, L$.
\item[(iv)] Defining some of the new variables is accompanied algebraic constraints that can be succinctly put polynomial equality constraints $a_p(\x,\y)=0$ and  polynomial inequality constraints $a_n(\x,\y)\geq 0$.
\end{enumerate}

We explain these steps via a simple example. Let state $\x \in \R$ of an SDE evolve as per
\begin{subequations}
\begin{equation}
d\x=-\exp(\sin(\log(\x)))dt+d\w.
\end{equation}
Following step (i), we defined three new states
\begin{equation}
\y_1=\log(\x), \quad \y_2=\sin(\y_1), \quad \y_3=\exp(\y_2).
\end{equation}
Next, we take derivatives of these states
\begin{equation}
\frac{d \y_1}{d\x}=\frac{1}{\x}, \quad \frac{d\y_2}{d\y_1}=\cos(\y_1), \quad \frac{d\y_3}{d\y_2}=\exp(\y_2)=\y_3.
\end{equation}
Except for $\cos(\y_1)$, other terms are in terms of rational functions of the states $\x$ and $\y_l$. As per step (iii), we define
\begin{equation}
\y_4=\cos(\y_1).
\end{equation}
Since we have $\frac{d\cos(\y_1)}{d\y_1}=\sin(\y_1)=\y_2$, we do not need to define additional states except for one to absorb the least common denominator of the rational terms
\begin{equation}
\y_5=\frac{1}{\x}.
\end{equation}
By definition of these states, we can obtain algebraic constraints as
\begin{equation}
\y_2^2+\y_4^2-1=0, \quad \x \y_5-1=0,
\end{equation}
where the first constraint arises from the trigonometric identity relating $\sin$ and $\cos$ while the second constraint arises from the definition of $\y_5$. These states also allows one to get some constraints such as 
\begin{equation}
0<\x, \quad -1 \leq \y_2 \leq 1, \quad 0 < \y_3, \quad -1 \leq \y_4 \leq 1.
\end{equation}
\end{subequations}

Recall the extended generator in \eqref{eqn:shsGen}. For rational SHSs with polynomial reset maps, or SHSs with elementary functions but no resets, the above recipe ensures that a monomial of the form 
$$\x_1^{m_1}\x_2^{m_2}\ldots \x_n^{m_n} \y_1^{m_{n+1}} \y_2^{m_{n+2}} \ldots \y_L^{m_{n+L}}$$
is mapped to monomials of the same form. This is because the state space is closed under derivatives.  
\begin{remark}
 For SHSs with rational functions, if the reset map is rational then each monomial in $\x$ is mapped to a different rational function and a lot many additional states may be required to define moment dynamics up to a certain order. In light of this, the above method may seem bit restrictive, but in practice there are numerous examples of SHSs wherein only polynomial reset maps appear.  Likewise, for elementary nonlinearities, we considered only SHSs that have no resets. However, the setup may be extended to include reset maps as long as the state space is closed under derivatives. For example, if $f$, $g$, $\lambda_l$  consist of $\exp(\x)$ for $\x \in \R$, then simple reset maps such as $\x \mapsto c_1 \x + c_2$ fall under this category.
\end{remark}
In the following Lemma, we provide a general form of moment dynamics for non-polynomial SHSs that can be casted as a polynomial SHS with constraints of the form \eqref{eq:const}. 
\begin{theorem}
Consider a single discrete mode SHS in \eqref{eq:SHSdef} with constraints of the form \eqref{eq:const}. Collecting moments of the state space $(\x,\y)$ up to a specific order in the vector $\XMom$, the moment dynamics is given by
\begin{subequations}
\begin{align}
\frac{d\mathcal{X}}{dt}=
& A \mathcal{X}+B \bar{\mathcal{X}},  \label{eq:MomMultModeGeneral1}\\\
0= & C_p \mathcal{X}+D_p \bar{\mathcal{X}}  \label{eq:MomMultModeGeneral2}\
\end{align}
\end{subequations}
where $\XbarMom$ contains moments of higher order and the matrices $A$, $B$, $C_p$, $D_p$ are appropriately defined.
\end{theorem}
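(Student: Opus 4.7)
The plan is to combine the recasting procedure described in Steps (i)--(iv) with the polynomial moment dynamics of Lemma~\ref{lemma:poly1}. First I would apply the recasting recipe to the non-polynomial SHS so as to produce an augmented state $(\x,\y)\in\R^{n+L}$ in which $f$, $g g^\top$, and $\lambda_r$ become polynomial functions, while the definition of each auxiliary state $\y_l$ contributes one polynomial equality constraint (and, where appropriate, polynomial inequality constraints) that are collected into $a_p$ and $a_n$ as in \eqref{eq:const}. The closure of the augmented state space under differentiation, which was the whole point of Steps (ii)--(iii), guarantees that the extended generator $\Leg$ in \eqref{eqn:shsGen} maps any monomial $\x_1^{m_1}\cdots\x_n^{m_n}\y_1^{m_{n+1}}\cdots\y_L^{m_{n+L}}$ to a finite linear combination of monomials in $(\x,\y)$.

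With this in hand, I would invoke Lemma~\ref{lemma:poly1} directly for the single-mode polynomial SHS over $(\x,\y)$: choosing the test functions to be the monomials corresponding to the entries of $\XMom$ and applying \eqref{eqn:ExtGenMom} term-by-term yields equation \eqref{eq:MomMultModeGeneral1} with matrices $A$ and $B$ extracted from the coefficients produced by $\Leg$, where $\XbarMom$ collects those higher-order monomial moments in $(\x,\y)$ that arise when $\Leg$ raises the total degree. This step is essentially bookkeeping once the polynomial structure is established.

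Next I would derive the algebraic row \eqref{eq:MomMultModeGeneral2}. Since $a_p(\x,\y)=0$ holds path-wise, for every monomial $\mu(\x,\y)$ in $(\x,\y)$ we have $\E[\mu(\x,\y)\,a_p(\x,\y)]=0$. Sweeping $\mu$ through an appropriate set of monomials so that each entry of $\mu\cdot a_p$ is a polynomial whose moments live in $\XMom\cup\XbarMom$, and then stacking these scalar identities, produces a linear system of the stated form $0=C_p\XMom+D_p\XbarMom$. The matrices $C_p$ and $D_p$ are read off from the coefficients of $a_p$ and the multiplier monomials, with the split between $C_p$ and $D_p$ determined by whether the resulting monomial moment falls inside $\XMom$ or $\XbarMom$.

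The main obstacle I anticipate is the recasting/closure step rather than the final matrix assembly: one must be careful that the procedure in Steps (i)--(iii) actually terminates with finitely many auxiliary states $\y_1,\dots,\y_L$ for which $(\x,\y)$ is simultaneously closed under both $\partial/\partial\x$ (needed for the drift and diffusion terms) and under the reset maps $\phi_r$ composed with the non-polynomial nonlinearities. This is exactly the scope restriction already flagged in the remark preceding the theorem (rational functions with polynomial resets, or elementary nonlinearities with no resets, or simple affine resets compatible with $\exp$), so the proof would invoke that scope to justify closure, after which the remaining arguments reduce to Lemma~\ref{lemma:poly1} and linear algebra as above.
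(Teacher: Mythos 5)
Your proposal is correct and follows essentially the same route as the paper: closure of the monomial test functions in $(\x,\y)$ under the extended generator yields \eqref{eq:MomMultModeGeneral1} via Lemma~\ref{lemma:poly1}, and multiplying the path-wise identities $a_p(\x,\y)=0$ by monomials and taking expectations yields the linear relations \eqref{eq:MomMultModeGeneral2}. Your version merely makes explicit two points the paper leaves implicit --- the $\E[\mu\, a_p]=0$ bookkeeping and the reliance on the scope restriction to guarantee the recasting terminates --- but the argument is the same.
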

\begin{proof}
Since the test function is monomial of the form
$(\x_1^{m_1}\x_2^{m_2}\ldots \x_n^{m_n} \y_1^{m_{n+1}} \y_2^{m_{n+2}}
\ldots \y_L^{m_{n+L}})$, these are closed under the extended
generator. Thus, \eqref{eq:MomMultModeGeneral1} follows from
Lemma~\ref{lemma:poly1}. The algebraic constraints of the form
$a_p(\x,\y)=0$ imply that the moments in which elements of
$a_p(\x,\y)=0$  appear are equal to zero. Similar to
\eqref{eq:MomMultMode2}, these are encoded as
\eqref{eq:MomMultModeGeneral2}.
\end{proof}
We can straightforwardly extend the above form of moment dynamics to an SHS with multiple discrete states by virtue of Lemma~\ref{lemma:Nto1modes}.

\subsection{Bounds on moments via semidefinite programming}
The preceding discussion provides a recipe to write a non-polynomial SHS as polynomial SHS with algebraic constraints consisting of both equalities and inequalities. While we have incorporated the equality constraints in moment dynamics in \eqref{eq:MomMultModeGeneral2}, the inequality constraints remain to be incorporated. Recall the constraints of obtained from \eqref{eq:sdpPos} have positive polynomials $h_i(\x)$ that can absorb inequalities. We can thus embed the constraints $a_n(\x,\y)$ in the matrices $M_i(\XMom,\XbarMom)$. Formally the semidefinite program is  given by
\begin{subequations}
    \begin{align}
      & \underset{\mathcal{X},\bar{\mathcal{X}}}{\textrm{minimize}} && \mu\\
      & \textrm{subject to} && 0= A \mathcal{X} + B \bar{\mathcal{X}} \\
       & {} && 0 = C_p \mathcal{X} + D_p \bar{\mathcal{X}} \\
      &&& M(\mathcal{X},\bar{\mathcal{X}}) \succeq 0\\
      &&& M_i(\mathcal{X},\bar{\mathcal{X}}) \succeq 0
           \end{align}
           \end{subequations}
As mentioned earlier, if a multimode SHS were to be considered, the form of SDP remains to be similar with another constraint $C\XMom+D \XbarMom=0$ being added.
\section{Numerical Examples}\label{sec:numerical} 
We illustrate our approach using two examples. The first example comprises of multiple discrete states and polynomial dynamics/resets, and the second example consists of a single discrete state with rational dynamics.

\begin{example}[TCP On-Off \cite{hespanha2006modelling,hespanha2005model}.]
\label{ex:tcpOnOff}
\begin{figure}[h]
\centering
\includegraphics[width=0.8\linewidth]{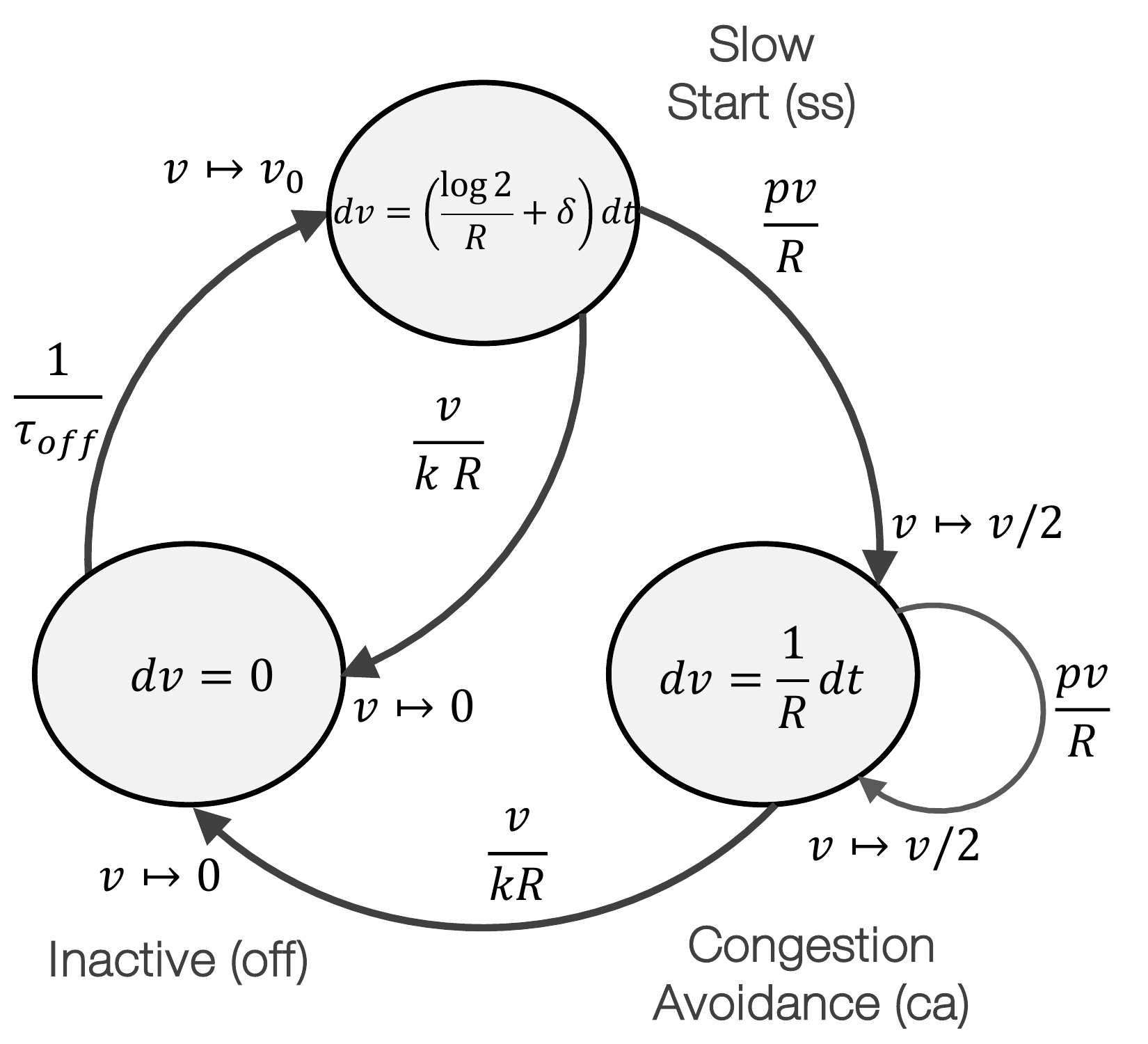}
\caption{Stochastic Hybrid System representation of TCP On-Off model. Here, there are three discrete modes and the continuous dynamics $\boldv$ evolves as per different differential equations depending upon which mode the system is operating in. Various reset intensities and reset maps are also shown.}
\label{fig:tcpOnOffSHS}
\end{figure}

We consider a simple version of the TCP on-off model. Here, the continuous state of the model is denoted by $v$, which represents the congestion window size of the TCP. The model consists of three discrete states, namely, $\{off, ss, ca\}$, which stand for off, slow start, and congestion avoidance, respectively. 

During these modes, the continuous-state evolves as
\begin{equation}
\label{eq:onoffDyn}
f(\q,\boldv,t)=
\begin{cases}
0, & \q=off \\
\frac{\log{2}}{R}\boldv + \delta, & \q=ss \\
\frac{1}{R}, & \q=ca\\
\end{cases}
\end{equation}

The transitions between the discrete modes are of three types: drop occurences, which correspond to transitions from the ss and ca modes to the ca mode; start of new flow, which correspond to the transitions from the off mode to the ss mode; and termination of flows, which correspond to transitions from the ss and ca modes to the off mode. These transitions are described via the reset maps 
\begin{align}
\label{eq:TCPmap1}
& \phi_{drop}(\q,\boldv)=
\begin{cases}
\left(ca,\frac{\boldv}{2}\right), & \q \in \{ss, ca\} \\
(off,\boldv), & \q = off
\end{cases} \\
\label{eq:TCPmap2}
& \phi_{start}(\q,\boldv)=
\begin{cases}
\left(\q,\boldv \right), & \q \in \{ss, ca\} \\
(ss,v_0), & \q = off
\end{cases} \\
\label{eq:TCPmap3}
& \phi_{end}(\q,\boldv)=
\begin{cases}
\left(off,0\right), & \q \in \{ss, ca\} \\
(off,\boldv), & \q = off
\end{cases}
\end{align}
with reset intensities
\begin{align}
\label{eq:TCPrate1}
& \lambda_{drop}(\q,\boldv)=
\begin{cases}
\frac{p \boldv}{R}, & \q \in \{ss, ca\} \\
0, & \q = off
\end{cases}\\
\label{eq:TCPrate2}
& \lambda_{start}(\q,\boldv)=
\begin{cases}
0, & \q \in \{ss, ca\} \\
\frac{1}{\tau_{off}}, & \q = off
\end{cases} \\
\label{eq:TCPrate3}
& \lambda_{end}(\q,\boldv)=
\begin{cases}
\frac{\boldv}{k R}, & \q \in \{ss, ca\} \\
0, & \q = off
\end{cases}.
\end{align}
Here $R$ is the round trip time, $p$ is the packet drop rate parameter. \\
\end{example}
To write moment dynamics, we define the indicator state variables $\boldb_{ss}$, $\boldb_{ca}$, and $\boldb_{off}$ as in \eqref{eq:defbi}. The resulting single-mode SHS is shown in Fig.~\ref{fig:tcpOneMode}.
\begin{figure}[h]
\centering
\includegraphics[width=0.9\linewidth]{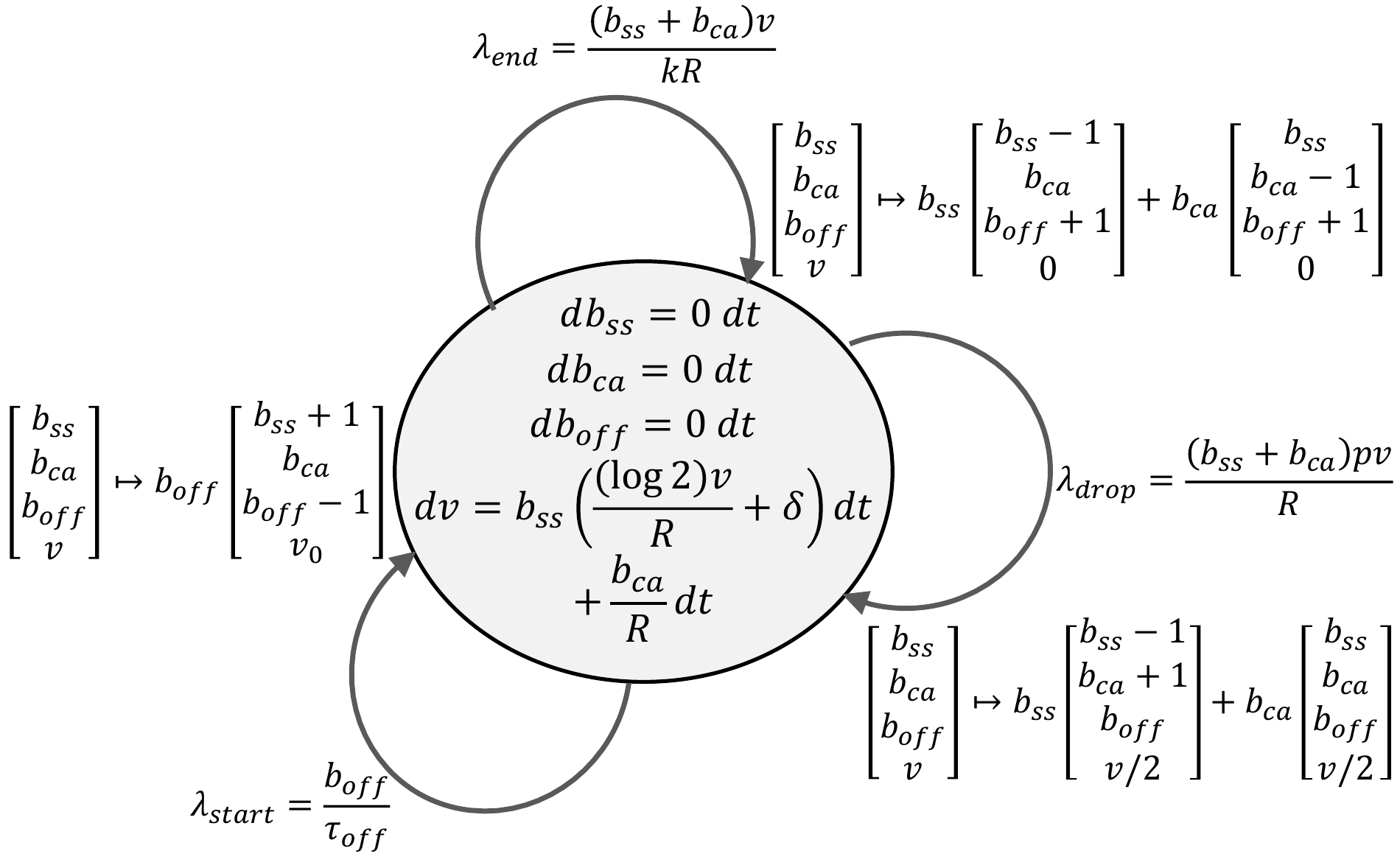}
\caption{An equivalent single-mode representation of the Stochastic Hybrid System representation of TCP On-Off model.}
\label{fig:tcpOneMode}
\end{figure}
Using extended generator, we write dynamics of the non-zero moments. In particular, we have
\begin{subequations}
\begin{multline}
\frac{d\E(\boldb_{ss}\boldv^m)}{dt}=\frac{m \log(2)}{R} \E\left(\boldb_{ss} \boldv^m\right)+m \delta \E\left(\boldb_{ss} \boldv^{m-1}\right) \\
+\frac{v_0^m}{\tau_{off}}\E(\boldb_{off})-\left(p+\frac{1}{k}\right)\frac{\E(\boldb_{ss}\boldv^{m+1})}{kR},
\end{multline}
\begin{multline}
\frac{d\E(\boldb_{ca}\boldv^m)}{dt}=\frac{m}{R} \E\left(\boldb_{ca} \boldv^{m-1}\right)+\frac{p}{2^{m}R}\E(\boldb_{ss}\boldv^{m+1}) \\
-\left(\frac{p(2^m-1)}{2^m R}+\frac{1}{kR}\right)\E(\boldb_{ca}\boldv^{m+1}),
\end{multline}
\begin{multline}
\frac{d\E(\boldb_{off}\boldv^m)}{dt}=-\frac{\E(\boldb_{off}\boldv^m)}{\tau_{off}}+\frac{\E(\boldb_{ss}\boldv^{m+1})}{kR} \\
+\frac{\E(\boldb_{ca}\boldv^{m+1})}{kR} ,
\end{multline}
for $m \in \Nat$. Using these moment equations along with the semidefinite constraints and algebraic constraints arising from the definition of $\boldb_{ss}, \boldb_{ca},\boldb_{off}$, the semidefinite program as in \eqref{eq:SDPss} can be set up. We can also generate matrices $M_i$ by using non-negativity of $\boldb_{ss}, \boldb_{ca},\boldb_{off}$, $1-\boldb_{ss},1-\boldb_{ca},1-\boldb_{off}$.
\end{subequations}
Taking specific values of $R=5$, $\tau_{off}=0.5$, $k=20$, $p=0.05$, $v_0=1$, we get $0.0252 \leq \E \left(\boldb_{ss}\right) \leq 1$ by utilizing moments of order $2$. Considering higher order moments improves these estimates, and we get $0.0912 \leq \E \left(\boldb_{ss}\right) \leq 0.115$ for moments of order $7$ (see Fig.~\ref{fig:tcpOnOffResults}). 
\begin{figure}[h]
\centering
\includegraphics[width=\linewidth]{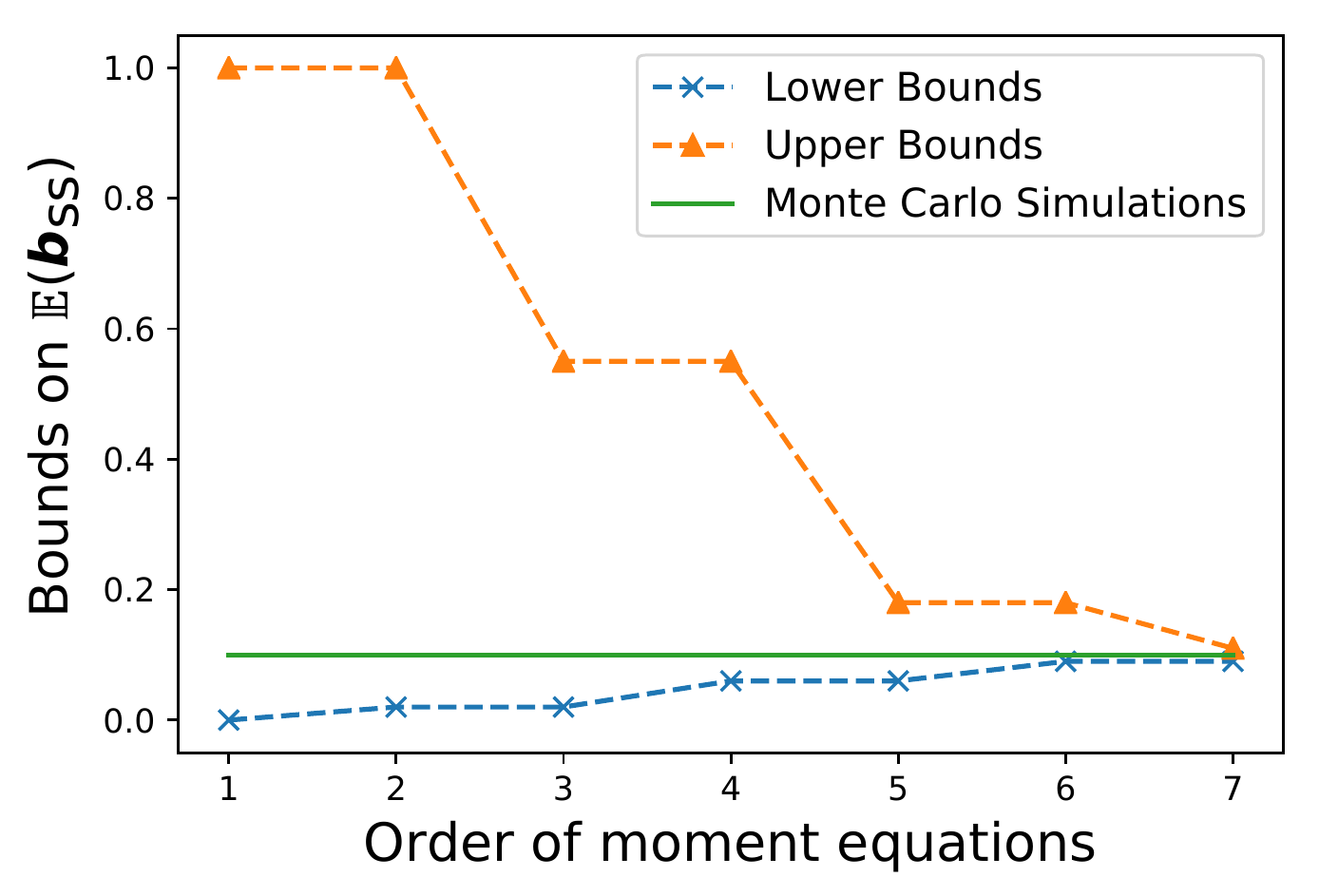}
\caption{Bounds on $\E(\boldb_{ss})$ (i.e., the probability that the system is in the mode $ss$) for the TCP on-off example. The bounds improve and converge to the true value of the moment as the order of moments used in the semidefinite program is increased.}
\label{fig:tcpOnOffResults}
\end{figure}

\begin{example}[Cell division]
\label{ex:celldiv}
An ubiquitous feature of living cells is their growth and subsequent division in daughter cells. Several models have been proposed to explain how growing cells decide to divide \cite{wrp10,tes12a,mys12,rhk14,modi2017analysis}. Here, we consider a model wherein the cell size grows as per the differential equation
\begin{equation}\label{eq:celldivDyn}
d\boldv=\left(\alpha_1+\frac{\alpha_1 \boldv}{\boldv+v_1}\right)dt.
\end{equation}
This setup encompasses both the linear growth of cell size (if $\alpha_2=0$ or if $v_1=0$) and the exponential growth (if $\boldv \ll v_1$). We assume that the cell divides as per a size-dependent rate
\begin{equation}\label{eq:celldivRate}
\lambda(\boldv) = (\boldv/v_2)^n
\end{equation}
This rate is analogous to the so-called sizer strategy in the limit when $n \to \infty$ wherein the cell divides as it attains a critical volume $v_2$. A finite value of $n$ represents imperfect implementation of a sizer model. Upon the reset, the cell size is reset to 
\begin{equation}\label{eq:celldivReset}
\phi(v) = \frac{\boldv}{2}.
\end{equation}
\begin{figure}[h]
\centering
\includegraphics[width=0.6\linewidth]{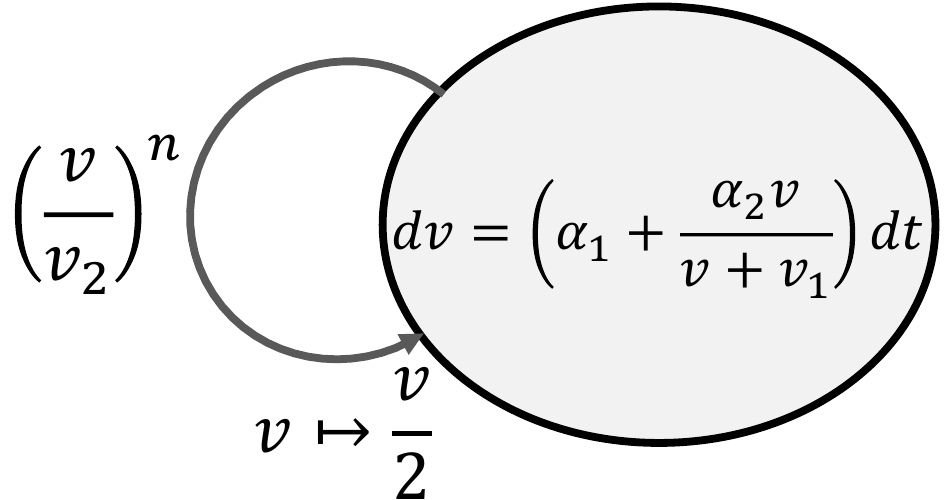}
\caption{Stochastic Hybrid System representation of sizer model of cell division. The cell size $v$ grows as per the deterministic differential equation that is a combination of two growth regimes, a linear growth with parameter $\alpha_1$ and a saturating exponential growth with parameter $\alpha_2$. The cell divides with intensity $(\boldv/v_2)^n$ and the size resets to $v/2$ (i.e., size divides in two daughters). The parameter $n$ represents imperfect implementation of the sizer and the cell divides at attainment of volume $v_2$ as $n \to \infty$.}
\label{fig:celldivSHS}
\end{figure}
\end{example}
Since the dynamics contains a rational function, we define a new state $\y=\frac{1}{\boldv+v_1}$. The SHS can then be recasted as polynomial SHS with the new continuous dynamics
\begin{equation}
d\boldv=\left(\alpha_1+\alpha_1 \boldv\y \right)dt,
\end{equation}
and an algebraic constraint
\begin{equation}
\boldv \y + v_2 \y -1 =0.
\end{equation}
The dynamics of the moment of a form $\E(\boldv^{m_1})$ can be computed as
\begin{multline}\label{eq:celldivGen}
\frac{d\E(\boldv^{m_1})}{dt}=
m_1 \alpha_1 \E(\boldv^{m_1-1})+\\
m_1 \alpha_2 \E(\boldv^{m_1}\y)
-\frac{2^m-1}{2^m v_2^n} \E \left( \boldv^{m_1+n}\y^{m_2}\right).
\end{multline}
These moment equations can be used along with the semidefinite constraints obtained from joint moments of the form $\E(\boldv^{m_1}\y^{m_2})$ and utilizing the algebraic constraints $\boldv \y + v_2 \y -1 =0$. 

As in the previous example, here too we can solve the steady-state moment equations. The technique can be used to explore the effect of parameters in noise in cell size. To this end, we plot the noise in cell size as a function of the cell size exponent $n$ in Fig.~\ref{fig:celldivResults}. Our results show that the cell size noise decreases with increase in $n$, which is expected since the size control on when the division should take place becomes stronger. Similar results were obtained in \cite{vargas2016conditions}, albeit for only exponential growth rate strategy and polynomial dynamics.
\begin{figure}[h]
\centering
\includegraphics[width=\linewidth]{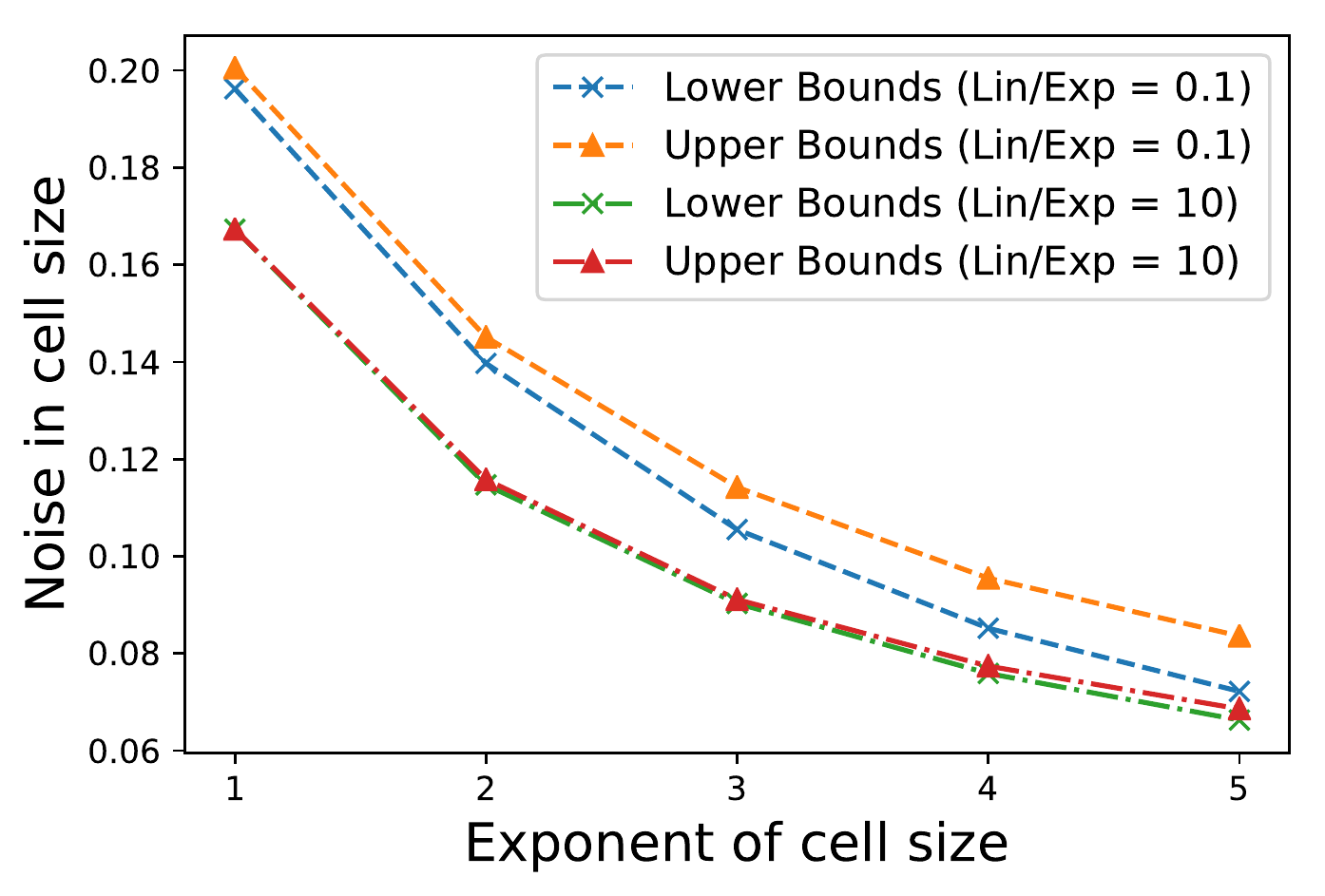}
\caption{Bounds on noise (quantified via coefficient of variation squared) in cell size as a function of cell size exponent. Using ten moment equations, the bounds are computed via semidefinite program for different exponents of cell size. It is seen that the noise in cell size decreases with increase in the exponent. Moreover, the noise is lower when the linear growth coefficient $\alpha_1$ is greater than the exponential growth coefficient $\alpha_2$. }
\label{fig:celldivResults}
\end{figure}

\section{Conclusion}\label{sec:conc} 
Stochastic hybrid systems (SHSs) consist of both discrete and continuous states. Their formal probabilistic analysis via the forward Kolmogorov equation is often analytically intractable. As an alternate, often the dynamics of its statistical moments is used to compute a few lower order moment to study the system. However, the moments themselves are generally described via an infinite dimensional coupled differential equations which cannot be solved for a few lower order moments without knowing the higher order moments. This problem is known as the moment closure problem and has been a subject of extensive study in the applied mathematics literature. In this paper, we presented a semidefinite programming based method to compute exact bounds on the moments of an SHS, and illustrated its utility in computing stationary moments of SHS defined by both polynomials and non-polynomials.
Although theoretically our method  computes bounds on both transient and stationary moments, its applicability is limited since the semidefinite programs do not scale very well. Our focus of future research would be to improve scalibility of the technique.

\section*{Acknowledgment}
AS is supported by the National Science Foundation Grant ECCS-1711548.

\bibliography{References,sample}

\end{document}